\newtheorem{theorem}{Theorem}[subsection]
\newtheorem{corollary}{Corollary}[subsection]
\newtheorem{proposition}{Proposition}[subsection]
\theoremstyle{definition}
\newtheorem{definition}{Definition}[subsection]
\newcommand\C{{\mathbb C}}
\newcommand\A{{\mathbb A}}
\newcommand\Z{{\mathbb Z}}
\newcommand{\LND}{\mathrm{LND}}
\newcommand{\Ker}{\mathrm{Ker}}
\newcommand{\Spec}{\mathrm{Spec}}
\newcommand\Vcal{{\mathcal V}}
\newcommand\Ucal{{\mathcal U}}
\title{Isomorphisms between cylinders over Danielewski surfaces}
\author{Lucy Moser-Jauslin}
\address{Institut de Math\'{e}matiques de Bourgogne, UMR 5584 CNRS, Universit\'{e} Bourgogne Franche-Comt\'{e}, F-21000 Dijon, France}
\email{lucy.moser-jauslin@u-bourgogne.fr}
\author{Pierre-Marie Poloni}
\address{Universit\"at Basel, Departement Mathematik und Informatik, Spiegelgasse 1, CH--4051 Basel, Switzerland}
\email{pierre-marie.poloni@unibas.ch}
\thanks{The first author received  support from the French \textquotedblleft Investissements d\textquoteright Avenir \textquotedblright program,  project ISITE-BFC (contract ANR-lS-IDEX-OOOB).
The second author acknowledges support by the Swiss National Science Foundation Grant \textquotedblleft Curves in the spaces\textquotedblright  200021--169508.}
\begin{document}

\begin{abstract}
A special Danielewski surface is an affine surface which is the total space of a principal $(\C,+)$-bundle over an affine line with a multiple origin.   Using a fiber product trick introduced by Danielewski, it is known that cylinders over two such surfaces are always isomorphic provided that both bases have the same number of origins. The goal of this note is to give an explicit method to find isomorphisms between cylinders over special Danielewski surfaces. The method is based on the construction of appropriate locally nilpotent derivations.
\end{abstract}

    \maketitle
    
\section{Introduction}

In 1989, Danielewski exhibited a family of pairwise non-isomorphic complex affine rational surfaces $Y_n$, $n\geq1$, such that the cylinders $Y_n\times \A^1$ are all isomorphic. The surface $Y_n$ is defined to be the hypersurface in $\A^3$ defined by $x^ny=z^2-1$ for every positive integer $n$. Since this result, several authors have generalized Danielewski's construction and have introduced the notion of  Danielewski surfaces.  These are certain affine surfaces which can be realized as the total space of an $\A^1$-fibration over the affine line. Special Danielewski surfaces  have the stronger property of being the total space of a principal $(\C,+)$-bundle over an affine line with a multiple origin. They were introduced in  \cite {DuPo} and are those Danielewski surfaces for which Danielewski's original argument can be used to find isomorphic cylinders.  However, the proof of these isomorphisms is not constructive. 

The main result of this article is to give a method to find  explicit isomorphisms of these cylinders. More precisely, the theorem~\ref{main-thm} produces, for every special Danielewski surface, an isomorphism between the cylinder over this surface and the cylinder over a classical Danielewski surface defined by an equation of the form  $xy=P(z)$ in $\A^3$. This involves the construction of an appropriate $(\C,+)$-action on the cylinder of one surface whose quotient gives the other  Danielewski surface. 

As a corollary, one gets  explicit  embeddings of all special Danielewski surfaces as complete intersections in $\A^4$.

\medskip
The paper is organized as follows.

In the section two, we recall the construction of Danielewski surfaces and some of their important properties, due to Fieseler and Dubouloz.  Then in the following section we introduce three particular families of special Danielewski surfaces which are later used as examples for the main result in section 4.  Two of these families are constructed as hypersurfaces, whereas for the last family, we do not know if they are realizable as hypersurfaces or not. In section four, we establish the theorem~\ref{main-thm}, which shows how to construct an isomorphism between the cylinders of any two special Danielewski surfaces. Finally, in section 5, we apply this result to the families of surfaces described in section 3. In particular, we obtain in the proposition~\ref{prop-classical-DS} a very simple explicit isomorphism between the cylinders of any two classical Danielewski surfaces whose respective equations are of the form $x^ny=P(z)$ and $x^my=Q(z)$.

\medskip
{\bf Acknowledgments.} Part of this work was done during the first joint meeting Brazil-France in Mathematics. The second-named author gratefully acknowledges financial support from   the R\'eseau Franco-Br\'esilien  de Math\'ematiques (RFBM).

\section{Danielewski surfaces after Danielewski,  Fieseler and Dubouloz}

In this section, we introduce some notations and summarize basic facts about Danielewski surfaces due to Fieseler \cite{Fi} and Dubouloz \cite{Du} (see also \cite{DuPo}). 

\subsection{Construction of Danielewski surfaces}

\begin{definition}
A Danielewski surface is a smooth complex affine surface $S$
equipped with an $\A^1$-fibration $\pi\colon  S\to\A^1=\Spec(\C[x])$  that restricts to a trivial $\A^1$-bundle over $\A^1_*=\Spec(\C[x,x^{-1}])$  such that the exceptional fiber $\pi^{-1}(0)$ is reduced and consists of a disjoint union 
\[\pi^{-1}(0)=\coprod_{i=1}^d \ell_i\]
of $d\geq2$ curves, $\ell_1,\ldots,\ell_d$, all isomorphic to the affine line.
\end{definition}

For every $1\leq i\leq d$, we denote by $\Ucal_i\subset S$ the open subvariety of $S$  defined by  
\[\Ucal_i=S\smallsetminus\coprod_{j\neq i}\ell_j\subset S.\]
Since every $\Ucal_i$  is isomorphic to the affine plane $\A^2$,  every Danielewski surface can be constructed by gluing together $d\geq2$ copies of $\A^2$ along $\A^1_*\times\A^1$. More precisely, every Danielewski surface is isomorphic to a variety $S(d,\boldsymbol{\sigma})$ defined as follows.

\begin{definition}
Let $d\geq2$ be an integer and let    
\[\boldsymbol{\sigma}=\big((n_1,\sigma_1(x)),\ldots,(n_d,\sigma_d(x))\big)\in(\Z_{>0}\times\C[x])^d\] be a sequence such that the polynomials $\sigma_i$ are distinct and satisfy that  $\deg(\sigma_i(x))<n_i$ for all $1\leq i\leq d$. We denote by $S(d,\boldsymbol{\sigma})$ the surface obtained by gluing together $d$  copies $\Ucal_i=\Spec(\C[x,u_i])$ of $\A^2$ along the open subsets 
\[\Ucal_i^*=\Spec(\C[x,x^{-1},u_i])\simeq\C^*\times\C\]  via the transition functions  
\begin{align*}
\Ucal_i^*  &\to\Ucal_j^* \\
(x,u_i)&\mapsto (x,x^{n_i-n_j}u_i+\frac{\sigma_i(x)-\sigma_j(x)}{x^{n_j}}).
\end{align*}
\end{definition}
By \cite[Proposition 1.4]{Fi}, every such surface $S=S(d,\boldsymbol{\sigma})$  is  affine. Moreover, the inclusion $\C[x]\hookrightarrow\C[S]$ defines an $\A^1$-fibration $\pi\colon S\to\A^1$ such that $\pi^{-1}(\A^1_*)\simeq\A^1_*\times\A^1$ and such that the  unique special fiber $\pi^{-1}(0)$   consists of a disjoint union of $d$ reduced copies of $\A^1$. This shows that  $S(d,\boldsymbol{\sigma})$ is indeed a Danielewski surface.

By construction, every Danielewski surface $S=S(d,\boldsymbol{\sigma})$  is canonically equipped with a regular function $u\in\C[S]$ whose restrictions to each of the open subsets $\Ucal_i$ are given by
\[u|_{\Ucal_i} = x^{n_i} u_i +\sigma_i(x) \in \C[x, u_i].\]
Note that $u$ restricts to a coordinate function on every general fiber of $\pi=\textrm{pr}_x\colon S\to\A^1$, but not on the exceptional fiber $\pi^{-1}(0)$.

\subsection{Additive group actions and isomorphic cylinders.}

Every Danielewski surface $S=S(d,\boldsymbol{\sigma})$  is canonically equipped with a regular $(\C,+)$-action $\delta:\C\times S\to S$ defined on each chart $\Ucal_i$   by 
\[\delta(\lambda,(x,u_i))=(x,u_i+\lambda x^{n-n_i}),\]
where $n=\max\{n_i\mid 1\leq i\leq d\}$. 
Algebraically,  the action $\delta$ corresponds to the locally nilpotent derivation $D\in\textrm{LND}(\C[S])$ that is defined by $D(x)=0$ and $D(u_i)=x^{n-n_i}$. Note that $D(u)=x^n$.

An important property of Danielewski surfaces is the fact that the map $\pi=\textrm{pr}_x\colon  S\to\A^1$ factors through a locally trivial fiber bundle $S\to Z(d)$ over the affine line $Z(d)$ with a $d$-fold origin, where the preimages of the $d$ origins are the $d$ affine lines $\ell_i$. In the case when the $(\C,+)$-action $\delta$ is  free, we have moreover that $S$ is  the total space of a $(\C,+)$-principal bundle over $Z(d)$. Recall (see \cite[Section 2.10]{DuPo}) that $\delta$   is free if and only if all $n_i$ are equal to each other, i.e.~ if and only if $n_i=n$ for all $1\leq i\leq d$. The latter condition is equivalent to the fact that the canonical class of  $S$ is trivial.  These Danielewski surfaces were called \emph{special} in \cite{DuPo}. 

Danielewski's fiber product trick goes then as follows. Take two Danielewski surfaces, say $S$ and $S'$, that are $(\C,+)$-principal bundles over the same $Z(d)$ and consider their fiber product $S\times_{Z(d)}S'$. Since every  $(\C,+)$-principal bundle over an affine base is trivial, we get at once that
\[S\times\A^1\simeq S\times_{Z(d)}S'\simeq S'\times\A^1,\]
hence that the cylinders over $S$ and $S'$ are isomorphic to each other.

\section{Examples of special Danielewski surfaces}

\subsection{Classical Danielewski surfaces.}
These surfaces are the ones  originally considered by Danielewski. They are defined as the hypersurfaces $W_{n,P}$ in $\A^3$ of equation 
\[W_{n,P}\colon x^ny=P(z),\]
where $n\geq1$ is a positive integer and where $P(z)=\prod_{i=1}^d(z-r_i)\in\C[z]$ is a polynomial  with  $d\geq2$ simple roots. 
 
Together with the restriction of the first projection $\pi=\textrm{pr}_x\colon W_{n,P}\to \A^1$, every such surface   defines a Danielewski surface. The special fiber $\pi^{-1}(0)$ is the union of the lines $\ell_1,\ldots,\ell_d$ given by
\[\A^1\simeq\ell_i=\{(0,y,r_i)\mid y\in\C\}\subset W_{n,P}.\]

Every open set $\Ucal_i=W_{n,P}\smallsetminus\coprod_{j\neq i}\ell_j$ is isomorphic to $\A^2$ and we have the isomorphisms  
\[\varphi_i\colon \Ucal_i\xrightarrow{\sim}\A^2, (x,y,z)\mapsto(x,u_i), \text{ where } u_i=\frac{z-r_i}{x^n}=\frac{y}{\prod_{j\neq i}(z-r_j)}\in\C[\Ucal_i].\]

\subsection{Danielewski hypersurfaces.} The hypersurfaces in $\A^3$ that are defined by an equation of the form 
\[H_{n,Q}\colon x^ny=Q(x,z),\]
where $n\geq1$ and where $Q(x,z)\in\C[x,z]$ is such that $\deg(Q(0,z))\geq2$ are called \emph{Danielewski hypersurfaces}. If moreover  the polynomial $Q(0,z)\in\C[z]$ has $d\geq2$ simple roots, say $r_1,\ldots,r_d$, then $\pi=\text{pr}_x\colon H_{n,Q}\to\A^1$ defines a Danielewski surface. Its special fiber is the union of the lines $\ell_1,\ldots,\ell_d$ given by
\[\A^1\simeq\ell_i=\{(0,y,r_i)\mid y\in\C\}\subset H_{n,Q}.\] 

Furthermore, there exist unique polynomials $\sigma_1(x),\ldots,\sigma_d(x)\in\C[x]$  of degree strictly smaller than $n$   such that $\sigma_i(0)=r_i$ and such that the congruences 
\[Q(x,\sigma_i(x))\equiv 0 \mod(x^n)\]
hold for all $1\leq i\leq d$.  Then, every open set $\Ucal_i=H_{n,Q}\smallsetminus\coprod_{j\neq i}\ell_j$ is isomorphic to $\A^2$ and we have the isomorphisms  
\[\varphi_i\colon \Ucal_i\xrightarrow{\sim}\A^2, (x,y,z)\mapsto(x,u_i), \text{ where } u_i=\frac{z-\sigma_i(x)}{x^n}.\]
(See \cite{DuPo} for the details.)

\subsection{Iterated Danielewski hypersurfaces}\label{Section:construction-iterated-Danielewski}

Introduced by Alhajjar \cite{Al}, iterated Danielewski hypersurfaces  are the hypersurfaces in $\A^3$ that are defined by an equation of the form
\[H_{n,Q,m,R}\colon x^mz=R(x,x^ny-Q(x,z)),\]
where $n,m\geq1$ and where $Q(x,t),R(x,t)\in\C[x,t]$. If the polynomial $R(0,-Q(0,t))$ in $\C[t]$ has only $d\geq2$  simple roots, then $H_{n,Q,m,R}$ is a Danielewski surface.

We discuss now a specific example in details. Consider the surface  $H\subset\A^3$ defined by  
\[H\colon \{xz=(xy+z^2)^2-1\}.\] 
The special fiber of $\pi=\textrm{pr}_x\colon H\to\A^1$ consists of the four lines $\ell_1,\ldots,\ell_4$ given by 
\[\ell_i=\{(0,y,\varepsilon^i)\mid y\in\C\},\]
where $\varepsilon=\boldsymbol{i}$ denotes a primitive fourth root of the unity. 

Letting $u=xy+z^2$ and $\sigma_i(x)=\varepsilon^{2i}+\frac{\varepsilon^{-i}}{2}x$ for all $1\leq i\leq 4$,  it follows  that the open set $\Ucal_i=H\smallsetminus\coprod_{j\neq i}\ell_j$ is isomorphic to $\A^2$ and one claims that  the map  
\[\varphi_i\colon \Ucal_i\xrightarrow{\sim}\A^2, (x,y,z)\mapsto(x,u_i), \text{ where } u_i=\frac{u-\sigma_i(x)}{x^2}\]
is an isomorphism.
\begin{proof}
First, we remark that the rational functions
\[\alpha_i=\frac{u-\varepsilon^{2i}}{x}=\frac{z}{u+\varepsilon^{2i}}\quad \text{ and }\quad \beta_i=\frac{z-\varepsilon^i}{x}=\frac{z-xy^2-2yz^2}{\prod_{j\in\{1,\ldots,4\}\smallsetminus\{i\}}(z-\varepsilon^j)}\]
are  regular   on $\Ucal_i$.   It follows  that $u_i$ is also an element of $\C[\Ucal_i]$, since one easily checks that 
\[\beta_i-\left(\alpha_i\right)^2=2\varepsilon^{2i}u_i.\]
Finally, the fact that $\varphi_i$ is an isomorphism follows from the following identities in $\C[\Ucal_i]$.
\begin{align*}
\alpha_i &= xu_i+\frac{\varepsilon^{-i}}{2}\\
\beta_i &=2\varepsilon^{2i}u_i+\left(\alpha_i\right)^2\\
z&=\varepsilon^i+x\beta_i\\
y&= \frac{u-z^2}{x}=\frac{u-(\varepsilon^i+x\beta_i)^2}{x}=\alpha_i-2\varepsilon^i\beta_i-x(\beta_i)^2.
\end{align*}
\end{proof}

\subsection{Double Danielewski surfaces}\label{Section:construction-double-Danielewski}

In \cite{GuSe}, Gupta and Sen studied some surfaces defined by two equations in $\A^4$ of the form
\[S\colon \{x^ny=Q(x,z) \text{ and } x^mt=R(x,z,y)\},\]
where $n,m\geq1$ and where $Q(x,z)\in\C[x,z]$ and $R(x,z,y)\in\C[x,z,y]$. They call them \emph{double Danielewski surfaces}. Indeed, if $Q(0,z)\in\C[z]$ has $d\geq2$ simple roots, say $r_1,\ldots,r_{d}$, and if every polynomial $R(0,r_i,y)\in\C[y]$ also has only simple roots, then $S$ is a Danielewski surface together with the first projection $\textrm{pr}_x\colon S\to\A^1$.   

Let us study here a specific example in details, namely the surface $D\subset\A^4$ defined by  
\[D\colon \{xy=z^2-1 \text{ and } xt=y^2-1\}.\] 
 It is a Danielewski surface, its special fiber  $\textrm{pr}_x^{-1}(0)$ consisting of the four lines given by  $\{(0,\pm1,\pm1,t)\mid t\in\C\}\subset D$. Let us introduce the following notation. For every pair $(i,j)\in\{-1,1\}\times\{-1,1\}$, we let 
\[\ell_{i j}=\{(0,i,j,t)\mid t\in\C\}\]
and 
\[\sigma_{i j}(x)=j+\frac{ij}{2}x.\]
Then, every open set $\Ucal_{i j}=D\smallsetminus\coprod_{(i',j')\neq (i,j)}\ell_{i' j'}$ is isomorphic to $\A^2$ and one claims that the map  
\[\varphi_{i j}\colon \Ucal_{i j}\xrightarrow{\sim}\A^2, (x,y,z,t)\mapsto(x,u_{i j}) \text{ where } u_{i j}=\frac{z-\sigma_{i j}(x)}{x^2}\]
is an isomorphism.

\begin{proof}
First, remark that $\alpha_j=\frac{z-j}{x}=\frac{y}{z+j}$ and $\beta_i=\frac{y-i}{x}=\frac{t}{y+i}$ are regular functions on  $\Ucal_{i j}$. Hence, it is straightforward to check that $u_{i j}$ is a regular function on $\Ucal_{i j}$, since
\[\beta_i-\left(\alpha_j\right)^2=2ju_{i j}.\]
The fact that $\varphi_{i j}$ is an isomorphism follows from the following identities in $\C[\Ucal_{i j}]$.
\begin{align*}
\alpha_j &= xu_{i j}+\frac{ij}{2}\\
\beta_i &=2ju_{i j}+\left(\alpha_j\right)^2\\
z&=j+x\alpha_j\\
y&=i+x\beta_i\\
t&= (y+i)\beta_i.
\end{align*}
\end{proof}

\section{Isomorphisms between  cylinders}\label{Section:plan of the construction}

In this section, we fix  an integer $d\geq 2$ and denote by $Z(d)$ the affine line with $d$ origins. Let  
  $P(z)=\prod_{i=1}^d(z-r_i)\in\C[z]$  be a polynomial with  simple roots. We will explain how to construct, given a special Danielewski surface $S$ which is a principal bundle over $Z(d)$, an isomorphism between its cylinder $S\times\A^1$ and the cylinder $W\times\A^1$ over the classical Danielewski surface
\[W=W_{1,P}\colon \{xy=P(z)=\prod_{i=1}^d(z-r_i)\} \text { in } \A^3.\]

Recall that a special Danielewski surface $S$ is constructed from a data set consisting of  a positive integer  $n\geq1$ and of distinct polynomial $\sigma_1(x),\ldots,\sigma_d(x)\in\C[x]$  of degree strictly smaller than $n$. More precisely,   $S$ is obtained by gluing $d$ copies, $\Ucal_1,\ldots,\Ucal_d$, of $\A^2=\Spec(\C[x,u_i])$ along $\A^1_*\times\A^1$ by means of the transition functions 
\[(x,u_i)\mapsto (x,u_i+\frac{\sigma_i(x)-\sigma_j(x)}{x^{n}}).\]
We also recall that the inclusion $\C[x]\hookrightarrow S$ defines an $\A^1$-fibration $\pi\colon S\to\A^1$ with a unique special fiber $\pi^{-1}(0)=\coprod_{i=1}^d \ell_i$ consisting of $d$ disjoint reduced copies of $\A^1$, and that we can define, by considering the regular function $u\in\C[S]$ whose restrictions on the open sets $\Ucal_i$ are given by \[u|_{\Ucal_i} = x^{n} u_i +\sigma_i(x) \in \C[x, u_i],\]
  the canonical locally nilpotent derivation $D\in\LND(\C[S])$ by setting $D(x)=0$ and $D(u)=x^n$.

Following Danielewski's original argument, we  consider the fiber product $S\times_{Z(d)}W$,  which we denote by $V$. We will use the following notations. We identify the ring of regular functions on $W$ with its canonical image in the ring of regular functions on $V$ and write 
\[\C[W]=\C[x,y,z]\subset\C[V], \quad\text{ where } xy=P(z).\]

Similarly, we identify $\C[S]$ as a subring of $\C[V]$.
Then, $V$ can be naturally seen as being obtained by gluing $d$ copies  $\Vcal_i=\Spec[x,u_i,z_i]$ of $\A^3$ where $z_i=(z-r_i)/x$. The open subvarieties $\Vcal_i$ are glued together along $\A^1_*\times\A^2$ via the transition functions
\[(x,u_i,z_i)\mapsto(x,u_i+\frac{\sigma_i(x)-\sigma_j(x)}{x^{n}},z_i+\frac{r_i-r_j}{x}).\] 
In particular, we have that the regular functions $u\in\C[S]\subset\C[V]$ and $z\in\C[W]\subset\C[V]$ satisfy that
\[u|_{\Vcal_i} = x^{n} u_i +\sigma_i(x) \in \C[\Vcal_i]=\C[x,u_i,z_i]\]
and 
\[z|_{\Vcal_i} = x z_i +r_i \in \C[\Vcal_i]=\C[x,u_i,z_i]\]
for all $1\leq i\leq d$.

\medskip
{\bf Plan of the construction.} 
Our construction of an isomorphism between $S\times\A^1$ and $W\times\A^1$  consists of three steps. We first find a regular function $\alpha\in \C[V]$ on the fiber product $V=S\times_{Z(d)}W$ such that 
\[\C[V]=\C[S][\alpha]\simeq\C[S\times\A^1].\]
We then use this equality to extend the canonical derivation on $\C[S]$ to a locally nilpotent derivation $\tilde{D}$ on  $\C[V]$ in such a way that \[\Ker(\tilde{D})=\C[x,y,z]=\C[W]\subset\C[V].\] Finally, in the last step, we construct an element $s\in\C[V]$ which is a slice for $\tilde{D}$. This gives 
\[\C[S\times\A^1]\simeq\C[S][\alpha]=\C[V]=\Ker[\tilde{D}][s]=\C[W][s]\simeq\C[W\times\A^1]\]   
hence the desired isomorphism between $S\times\A^1$ and $W\times\A^1$.

\medskip
{\bf Step 1.} Since $\ell_1,\ldots,\ell_d$ are disjoint closed subvarieties of the affine variety $S$, there exists a regular function $f\in\C[S]$ such that 
\begin{equation}\tag{$\star$}\label{equa*}
f|_{\ell_i} = r_i \quad\text{for all } 1\leq i\leq d.
\end{equation}
In other words, we can choose a function $f\in\C[V]$ such that
\[f|_{\Vcal_i}=r_i+x\tilde{f}_i \quad \text{for all } 1\leq i\leq d,\]
where $\tilde{f}_i$ is some element in $\C[x,u_i]\subset\C[\Vcal_i]$.
Therefore, the rational function   \[\alpha=\frac{z-f}{x}\]
is in fact a regular function on $V$, since 
\[(z-f)|_{\Vcal_i}=xz_i+r_i-r_i-x\tilde{f}_i=x(z_i-\tilde{f}_i)\]
is divisible by $x$ for all $i$. 

It is then straightforward to check that $z=f+x\alpha$ and $y=x^{-1}P(f+x\alpha)$ are both elements of $\C[S][\alpha]$, hence
\[\C[V]=\C[S][\alpha].\]

\medskip
  
{\bf Step 2.} Note that  the image $D(f)\in\C[S]$ of $f$ under the derivation $D$ is divisible by $x$. Therefore, we can extend $D$ to a locally nilpotent derivation $\tilde{D}$ on $\C[V]=\C[S][\alpha]$ by letting 
\[\tilde{D}(\alpha)=-\frac{D(f)}{x}.\]
With this choice, we  then have that $\tilde{D}(z)=\tilde{D}(f+x\alpha)=0$ and that $\tilde{D}(y)=\tilde{D}(\frac{P(z)}{x})=0$. 

\medskip
{\bf Step 3.} To find a slice $s$ for $\tilde{D}$, it suffices to take a polynomial $g(x,t)\in\C[x,t]$ such that the congruences
\begin{equation}\tag{$\star\star$}\label{equa**}
g(x,r_i+xt)\equiv \sigma_i(x) \mod(x^n)
\end{equation}
hold in $\C[x,t]$  for all $1\leq i\leq d$, and to define \[s=\frac{u-g(x,z)}{x^n}.\]
Indeed, since every restriction 
\[(u-g(x,z))|_{\Vcal_i}=x^nu_i+\sigma_i(x)-g(x,r_i+xz_i)\] 
is divisible by $x^n$ in $\C[\Vcal_i]$, it follows that $s$ is a regular function on $V$. Moreover, it is clear that  $\tilde{D}(s)=1$.

In order to construct a suitable polynomial $g(x,t)$, one can proceed as follows. If we denote  
\[\sigma_i(x)=\sum_{j=0}^{n-1}a_{ij}x^j\quad \text{ with } a_{ij}\in\C,\]
then we can define 
\[g(x,t)=\sum_{j=0}^{n-1}g_j(t)x^j,\]
where the $g_j(t)\in\C[t]$ are Hermite interpolation polynomials such that
\[g_j(r_i)=a_{ij}\quad\text{ and }\quad g_j^{(k)}(r_i)=0\]
for all $1\leq i\leq d$ and all $0\leq j\leq n-1$, $1\leq k\leq n-1-j$.

\medskip
{\bf The isomorphism.} Finally, the above three steps have produced the  desired isomorphism. We have therefore proven the following result.
\begin{theorem}\label{main-thm} Let $S$ be a special Danielewski surface over $Z(d)$, and let $W$ be the hypersurface defined by the equation $XY=P(Z)$, where $P$ is a polynomial of degree $d$ whose roots are all simple. Suppose $f$ and $g$ are chosen to satisfy $($\ref{equa*}$)$ and $($\ref{equa**}$)$ above. Then the map 
\[\Phi\colon\C[W\times\A^1]=\C[X,Y,Z,W]/(XY-P(Z))=\C[x,y,z,w]\xrightarrow{\sim}\C[S\times\A^1]=\C[S][\alpha]\]
defined by
\begin{align*}
\Phi(x)&=x\\
\Phi(z)&=f+x\alpha\\
\Phi(y)&=\frac{P(f+x\alpha)}{x}\\
\Phi(w)&=\frac{u-g(x,f+x\alpha)}{x^n}
\end{align*}
is an isomorphism.
\end{theorem}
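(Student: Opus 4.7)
The plan is to verify the three-step construction outlined in the text. The key strategy is to exhibit two trivializations of the fiber product $V = S \times_{Z(d)} W$ as an $\A^1$-bundle: one over $S$ via $\alpha$, and one over $W$ via a slice $s$ of a well-chosen extension of $D$. Composing these two trivializations yields the explicit isomorphism $\Phi$.

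First, I would verify that $\alpha = (z-f)/x$ is regular on $V$ and generates $\C[V]$ over $\C[S]$. The condition \eqref{equa*} ensures that on each chart $\Vcal_i$ we have $\alpha|_{\Vcal_i} = z_i - \tilde{f}_i \in \C[x, u_i, z_i]$. To obtain $\C[V] = \C[S][\alpha]$, one argues locally: on $\Vcal_i$, the relation $z_i = \alpha + \tilde{f}_i$ shows that $z_i \in \C[S][\alpha]|_{\Vcal_i}$, so $\C[\Vcal_i] = \C[S][\alpha]|_{\Vcal_i}$, and gluing gives the global equality.

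Next, I would extend $D$ to a locally nilpotent derivation $\tilde{D}$ on $\C[V] = \C[S][\alpha]$ by setting $\tilde{D}(\alpha) = -D(f)/x$. This is well defined because $D(f)$ vanishes along each $\ell_i$ (since $f|_{\ell_i} = r_i$ is constant and $D$ restricts to the special fiber), hence is divisible by $x$ in $\C[S]$. A direct computation gives $\tilde{D}(z) = 0$ and $\tilde{D}(y) = \tilde{D}(P(z)/x) = 0$, so $\C[W] = \C[x, y, z] \subseteq \Ker(\tilde{D})$. Local nilpotency of $\tilde{D}$ can be verified chart by chart: in the local coordinates $(x, u_i, z_i)$ on $\Vcal_i$, the derivation $\tilde{D}$ reduces to $\partial/\partial u_i$. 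For the slice, I would use a Hermite interpolation argument to produce a polynomial $g(x, t)$ satisfying \eqref{equa**}; then $s = (u - g(x,z))/x^n$ is regular on $V$ by \eqref{equa**}, and $\tilde{D}(s) = x^n/x^n = 1$.

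To conclude, I would invoke the standard slice theorem for locally nilpotent derivations: since $\tilde{D}$ has slice $s$, one has $\C[V] = \Ker(\tilde{D})[s]$ as a polynomial algebra in $s$. The step I expect to be the main obstacle is the identification $\Ker(\tilde{D}) = \C[W]$, since only the inclusion $\C[W] \subseteq \Ker(\tilde{D})$ is immediate from the computations above. I would approach the reverse inclusion geometrically: $\tilde{D}$ generates the $\C$-action on $V = S \times_{Z(d)} W$ pulled back through the first projection, whose geometric quotient is $W$, so the ring of invariants is exactly $\C[W]$. Once this is established, $\C[V] = \C[W][s]$, and the map $\Phi$ sending $X, Y, Z, W$ to $x, y, z, s$ — expressed via $y = P(f + x\alpha)/x$ and $s = (u - g(x, f + x\alpha))/x^n$ — is an isomorphism, recovering the formulas in the statement.
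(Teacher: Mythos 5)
Your proposal is correct and follows essentially the same three-step construction as the paper: regularity of $\alpha$ with $\C[V]=\C[S][\alpha]$, the extension $\tilde{D}$ with $\tilde{D}(\alpha)=-D(f)/x$ killing $\C[x,y,z]$, and the slice $s=(u-g(x,z))/x^n$ followed by the slice theorem. The one point you single out as delicate, the reverse inclusion $\Ker(\tilde{D})\subseteq\C[W]$, is indeed left implicit in the paper, and your chart computation $\tilde{D}|_{\Vcal_i}=\partial/\partial u_i$ (equivalently, identifying the action as the pullback of the principal bundle $S\to Z(d)$ along $W\to Z(d)$) settles it correctly.
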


\begin{corollary}
Keeping the same notation as in the previous theorem, it follows that the special Danielewski surface $S$ is isomorphic to the surface defined by the equations 
\[xy=P(z) \text{ and } \Phi^{-1}(\alpha)=\lambda\]
in $\A^4$, where $\lambda\in\C$ is any constant.
\end{corollary}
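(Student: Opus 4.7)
The plan is to derive this directly from Theorem~\ref{main-thm} by transporting the obvious closed embedding $S\hookrightarrow S\times\A^1$ (as a coordinate hyperplane) through the isomorphism $\Phi$.

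First I would observe that, by the very definition of $\alpha$ in Step~1 of the construction, $\C[V]=\C[S\times\A^1]=\C[S][\alpha]$ is a polynomial ring in the variable $\alpha$ over $\C[S]$. Hence for any $\lambda\in\C$ the quotient map $\C[S][\alpha]\twoheadrightarrow\C[S][\alpha]/(\alpha-\lambda)$ is an isomorphism onto $\C[S]$, so the closed subscheme of $S\times\A^1$ cut out by $\alpha-\lambda$ is isomorphic to $S$.

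Next I would use the isomorphism $\Phi\colon\C[W\times\A^1]\xrightarrow{\sim}\C[S\times\A^1]$ from Theorem~\ref{main-thm}. Since $\Phi$ is an isomorphism, $\Phi^{-1}(\alpha-\lambda)=\Phi^{-1}(\alpha)-\lambda$ defines in $W\times\A^1$ a closed subscheme that is mapped isomorphically onto the hyperplane $\{\alpha=\lambda\}\subset S\times\A^1$, hence is isomorphic to $S$. Choosing any polynomial representative of $\Phi^{-1}(\alpha)$ in $\C[x,y,z,w]$ and recalling that $W\times\A^1$ itself is cut out in $\A^4=\Spec\C[x,y,z,w]$ by the single equation $xy-P(z)$, this closed subscheme of $W\times\A^1$ is precisely the subvariety of $\A^4$ defined by the pair of equations
\[xy=P(z)\quad\text{and}\quad \Phi^{-1}(\alpha)=\lambda.\]

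There is no real obstacle here; the only point requiring a small comment is that the expression $\Phi^{-1}(\alpha)$ is \emph{a priori} an element of the quotient ring $\C[x,y,z,w]/(xy-P(z))$ rather than of $\C[x,y,z,w]$, but any lift to a polynomial defines the same subscheme of the hypersurface $\{xy=P(z)\}$, so the statement in $\A^4$ is well-posed. This also yields the explicit complete-intersection embedding of $S$ in $\A^4$ promised in the introduction.
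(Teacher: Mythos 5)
Your argument is correct and is exactly the (implicit) reasoning behind the paper's corollary, which is stated without proof as an immediate consequence of Theorem~\ref{main-thm}: since $\C[S\times\A^1]=\C[S][\alpha]$ is a polynomial ring over $\C[S]$ in the variable $\alpha$, the level set $\{\alpha=\lambda\}$ is a copy of $S$, and transporting it through $\Phi$ realizes $S$ as the complete intersection $xy=P(z)$, $\Phi^{-1}(\alpha)=\lambda$ in $\A^4$. Your remark about choosing a lift of $\Phi^{-1}(\alpha)$ to $\C[x,y,z,w]$ is a reasonable point of care that does not affect the conclusion.
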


\section{Some explicit examples}


\subsection{Russell's isomorphism}\hfill

In \cite{SY}, the authors give an explicit isomorphism, which is due to Russell, between the cylinders over the Danielewski surfaces of respective equations $xy=z^2-1$ and $x^2y=z^2-1$. See also Theorem 10.1 in \cite{Fre}. With our method, we can recover this isomorphism easily.  

In this section we will show how to apply the method of the previous section to treat  a slightly more general  case, and, in the end of the section, we will specialize to the case of the Russell isomorphism. 
We shall use the following notations. We denote by $P(z)=\prod_{i=1}^d(z-r_i)\in\C[z]$  a polynomial  with  $d\geq2$ simple roots and by $W_{n,P}$ the hypersurface in $\A^3$ defined  by the equation $x^ny=P(z)$, where $n\geq1$ is a positive integer. Moreover, we let 
\[\C[W_{n,P}]=\C[X,Y,Z]/(X^nY-P(Z))=\C[x_n,y_n,z_n]\]
and
\[\C[W_{n,P}\times\A^1]=\C[X,Y,Z,W]/(X^nY-P(Z))=\C[x_n,y_n,z_n,w_n],\]
 where $(x_n)^ny_n=P(z_n)$.

Accordingly with the previous section, we now proceed to construct an isomorphism between $\C[W_{1,P}\times\A^1]$ and $\C[W_{2,P}\times\A^1]$.

First, note that the regular function $f=z_2\in\C[W_{2,P}]$ is equal to $r_i$ on every point of the line $\{x=0, z=r_i\}\subset W_{2,P}$. Also, in this case, $u=z_2\in\C[W_{2,P}]$  restricts to a coordinate function on every general fiber of the projection $\textrm{pr}_{x}\colon W_{2,P}\to\C$. 

Secondly, since $P$ has only simple roots, there exist two polynomials $U,V\in\C[z]$ such that $U(z)P'(z)+V(z)P(z)=1$ in $\C[z]$. Then, the polynomial  
\[g(z)=z-P(z)U(z)\in\C[z]\]
satisfies that  \[g(r_i)=r_i\] 
and \[g'(r_i)=1-P'(r_i)U(r_i)-P(r_i)U'(r_i)=0\]
for all $1\leq i\leq d$. 
With these choices for $f$, $u$ and $g$,  we get the isomorphism
\[\Phi\colon\C[W_{1,P}\times\A^1]=\C[x_1,y_1,z_1,w_1]\xrightarrow{\sim}\C[W_{2,P}\times\A^1]=\C[x_2,y_2,z_2,w_2]\]
defined by
\begin{align*}
\Phi(x_1)&=x_2\\
\Phi(z_1)&=z_2+x_2w_2\\
\Phi(y_1)&=\frac{P(z_2+x_2w_2)}{x_2}\\
\Phi(w_1)&=\frac{z_2-g(z_2+x_2w_2)}{x_2^2},
\end{align*}
whose inverse isomorphism 
\[\Psi\colon\C[W_{2,P}\times\A^1]=\C[x_2,y_2,z_2,w_2]\xrightarrow{\sim}\C[W_{1,P}\times\A^1]=\C[x_1,y_1,z_1,w_1]\]
is defined by
\begin{align*}
\Psi(x_2)&=x_1\\
\Psi(z_2)&=x_1^2w_1+g(z_1)\\
\Psi(y_2)&=\frac{P(x_1^2w_1+g(z_1))}{x_1^2}\\
\Psi(w_2)&=\frac{z_1-(x_1^2w_1+g(z_1))}{x_1}.
\end{align*}

In the special case where $P(z)=z^2-1$, we have $g(z)=z-(z^2-1)\dfrac{z}{2}$ and we thus   obtain the  inverse isomorphisms
\[\Phi_*\colon \{x^2y=z^2-1\}\times\A^1\to \{xy=z^2-1\}\times\A^1\]
and 
\[\Psi_*\colon \{xy=z^2-1\}\times\A^1\to \{x^2y=z^2-1\}\times\A^1\]
defined by
\begin{align*}
\Phi_*(x,y,z,w)&=\Big(x,\frac{P(z+xw)}{x},z+xw,\frac{z-g(z+xw)}{x^2}\Big)\\
&=\Big(x,\frac{(z+xw)^2-1}{x},z+xw,\\
&\qquad\qquad\frac{z-(z+xw)+\frac{1}{2}(z+xw)((z+xw)^2-1)}{x^2}\Big)\\
&=\Big(x,\frac{z^2-1}{x}+2zw+xw^2,z+xw,\\
&\qquad\qquad\frac{\frac{1}{2}(z+xw)(z^2-1+x^2w^2)+xw(z^2-1)+x^2w^2z}{x^2}\Big)\\
&=\Big(x,xy+2zw+xw^2,z+xw,\frac{1}{2}(z+xw)(y+w^2)+xyw+w^2z\Big)\\
&=\Big(x,xy+2zw+xw^2,z+xw,\frac{1}{2}(yz+3zw^2+3xyw+xw^3)\Big)
\end{align*}
and 
\begin{align*}
\Psi_*(x,y,z,w)&=\Big(x,\frac{P(x^2w+g(z))}{x^2},x^2w+g(z),\frac{z-x^2w-g(z)}{x}\Big)\\
&=\Big(x,\frac{x^4w^2+2x^2wg(z)+(z^2-1)^2(\frac{1}{4}z^2-1)}{x^2},x^2w+g(z),\\
&\qquad\qquad-xw+\frac{1}{2}\cdot\frac{z(z^2-1)}{x}\Big)\\
&=\Big(x,x^2w^2+2wg(z)+y^2(\frac{1}{4}z^2-1),x^2w+g(z),-xw+\frac{1}{2}zy\Big).
\end{align*}

\subsection{Classical Danielewski surfaces}

In light of the previous example,  we obtain simple explicit isomorphisms between the cylinders over two classical Danielewski surfaces. 

\begin{proposition}\label{prop-classical-DS}
Let $d,n,m\geq1$ be positive integers and let $P(z)=\prod_{i=1}^d(z-a_i)$ and $Q(z)=\prod_{i=1}^d(z-b_i)$ be polynomials in $\C[z]$ with simple roots. Recall that $W_{n,P}$ and $W_{m,Q}$ denote the hypersurfaces in $\A^3=\Spec(\C[x,y,z])$ that are defined respectively by the equation 
\[W_{n,P}\colon x^ny=P(z)\]
and
\[W_{m,Q}\colon x^my=Q(z).\] 

Let $f,g\in\C[z]$ be two Hermite interpolating polynomials such that 
\[f(b_i)=a_i\quad\text{ and }\quad f^{(k)}(b_i)=0\quad \text{ for all } 1\leq i\leq d, 1\leq k\leq n-1\]
and   
\[g(a_i)=b_i\quad\text{ and }\quad g^{(k)}(a_i)=0\quad \text{ for all } 1\leq i\leq d, 1\leq k\leq m-1.\]
Then, the maps  
\begin{align*}
\varphi\colon &W_{n,P}\times\A^1\to W_{m,Q}\times\A^1\\
&(x,y,z,w)\mapsto(x,\frac{Q(g(z)+x^mw)}{x^m},g(z)+x^mw,\frac{z-f(g(z)+x^mw)}{x^n})
\end{align*}
and 
\begin{align*}
\psi\colon &W_{m,Q}\times\A^1\to W_{n,P}\times\A^1\\
&(x,y,z,w)\mapsto(x,\frac{P(f(z)+x^nw)}{x^n},f(z)+x^nw,\frac{z-g(f(z)+x^nw)}{x^m})
\end{align*}
are regular and define inverse isomorphisms between the cylinders $W_{n,P}\times\A^1$ and $W_{m,Q}\times\A^1$.    
\end{proposition}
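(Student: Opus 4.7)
The plan is to verify two things: that $\varphi$ and $\psi$ are well-defined morphisms (i.e.\ that the rational expressions appearing in their definitions are in fact regular), and that their composition is the identity in one order, the other following by the symmetry of the construction under interchanging the data $(n,P,f)$ and $(m,Q,g)$.

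I will first handle the regularity of $\varphi$, which reduces to showing that $x^m$ divides $Q(g(z)+x^m w)$ and that $x^n$ divides $z - f(g(z)+x^m w)$ in $\C[W_{n,P}\times\A^1]$. The Hermite conditions on $g$ force $g(z)-b_i$ to have a zero of order at least $m$ at $z=a_i$, hence $P(z)^m \mid Q(g(z))$ in $\C[z]$; since $P(z) = x^n y$ on $W_{n,P}$, this upgrades to $x^{nm} \mid Q(g(z))$, and a Taylor expansion in $x^m w$ then yields $x^m \mid Q(g(z)+x^m w)$. For the second divisibility, the relation $f(g(a_i)) = f(b_i) = a_i$ gives $P(z) \mid f(g(z))-z$, hence $x^n$ divides $f(g(z))-z$ on $W_{n,P}$; the higher Taylor terms of $f(g(z)+x^m w) - f(g(z))$ carry an extra factor $x^{mk}$ together with $f^{(k)}(g(z))$, and for $1 \leq k \leq n-1$ the Hermite vanishing $f^{(k)}(b_i)=0$ produces $Q(g(z))$ as a factor of $f^{(k)}(g(z))$, which contributes another $x^{nm}$ by the previous step; for $k\geq n$ the factor $x^{mk}$ alone already suffices.

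The proof that $\psi\circ\varphi = \mathrm{id}$ is then a short calculation hinging on the single identity
\[
f(z') + (x')^n w' \;=\; f(g(z)+x^m w) + \bigl(z - f(g(z)+x^m w)\bigr) \;=\; z,
\]
where $(x',y',z',w')$ denotes $\varphi(x,y,z,w)$. From this, the third coordinate of $\psi\circ\varphi(x,y,z,w)$ is immediately $z$, the second is $P(z)/x^n = y$, and the fourth collapses to $w$ via $z'-g(z) = x^m w$. The main technical obstacle I anticipate is the regularity of the fourth coordinate of $\varphi$ in the regime $m<n$: the naive factor $x^m$ coming from the Taylor expansion is then insufficient to reach $x^n$, and one genuinely needs the Hermite vanishing of the higher derivatives of $f$ at the roots of $Q$ to inject additional powers of $x$ through the divisibility $P(z)^m \mid Q(g(z))$ established above.
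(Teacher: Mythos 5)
Your proposal is correct and follows essentially the same route as the paper: the Hermite conditions give the divisibility $P(z)^m\mid Q(g(z))$ (equivalently $Q(z)^n\mid P(f(z))$), a Taylor expansion in $x^mw$ (resp.\ $x^nw$) handles the remaining terms to establish regularity, and the composition is checked via the identity $f(z')+(x')^nw'=z$. If anything, you are slightly more explicit than the paper about the $k$-by-$k$ case analysis in the Taylor expansion and about the cancellation yielding $\psi\circ\varphi=\mathrm{id}$, which the paper leaves as ``straightforward.''
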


\begin{proof}
On the one hand, we have that 
\[(P\circ f)(b_i)=(P\circ f)'(b_i)=\cdots=(P\circ f)^{(n-1)}(b_i)=0\quad\text{ for all } i.\]
This shows that $P(f(z))$ is divisible by $(Q(z))^n$, hence that $P(f(z))/x^n$ is a regular function on $W_{m,Q}$. Similarly,  $Q(g(z))/x^m$ is a regular function on $W_{n,P}$. 

On the other hand, we have that 
\[z-g(f(z)+x^nw)=z-g(f(z))-\sum_{k=1}^{\infty}\frac{(x^nw)^k}{k!}g^{(k)}(f(z))\]
is an element of the ideal $(Q(z),x^m)\C[x,z,w]$. Therefore, $x^{-m}(z-g(f(z)+x^nw))$ is a regular function on $W_{m,Q}\times\A^1=\Spec(\C[x,y,z,w]/(x^my-Q(z)))$. Similarly,  $x^{-n}(z-f(g(z)+x^mw))$ is a regular function on $W_{n,P}\times\A^1$. 

Thus, $\varphi$ and $\psi$ are regular maps. It is moreover straightforward to check that they are inverse of each other.
\end{proof}

\subsection{An iterated Danielewski hypersurface} 

Let us look again at the iterated Danielewski hypersurface  $H=\{xz=(xy+z^2)^2-1\}$ in $\A^3$ that we studied at Section \ref{Section:construction-iterated-Danielewski}. Recall that the special fiber consists of the four lines   
\[\ell_i=\{(0,y,\varepsilon^i)\mid y\in\C\}, 1\leq i\leq 4,\]
where $\varepsilon=\boldsymbol{i}\in\C$ denotes a primitive fourth root of the unity, and that the surface $H$ corresponds to the data $n=2$ and $\sigma_i(x)=\varepsilon^{2i}+\frac{\varepsilon^{-i}}{2}x$ for all $1\leq i\leq 4$.

We  give now an isomorphism from $H\times\A^1$ to $\{xy=z^4-1\}\times\A^1$. Keeping the notations of Section~\ref{Section:plan of the construction}, we obtain the isomorphism
\begin{align*}
\{xz=(xy+z^2)^2-1\}\times\A^1&\to \{xy=z^4-1\}\times\A^1\\
(x,y,z,w)&\mapsto(x,\frac{(f+xw)^4-1}{x},f+xw,\frac{u-g(x,f+xw)}{x^2}),
\end{align*}
where  
\begin{align*}
r_i&=\varepsilon^i\\  
f&=z\\
u&=xy+z^2\\
g(x,z)&=z^2-\frac{1}{2}z^2(z^4-1)+x\frac{1}{2}z^3.
\end{align*}

\subsection{A double Danielewski surface}
We consider again  the double Danielewski surface  
\[D= \{xy=z^2-1 \text{ and } xt=y^2-1\}\quad \text{ in } \A^4\] that we described at Section \ref{Section:construction-double-Danielewski}. Recall that the special fiber consists of the four lines  
\[\ell_{i j}=\{(0,i,j,t)\mid t\in\C\}\}\] 
and that the surface $D$ corresponds to the data $n=2$ and $\sigma_{i j}(x)=j+\frac{ij}{2}x$, where $(i,j)\in\{1,-1\}\times\{1,-1\}$. 

Following the notations of Section~\ref{Section:plan of the construction}, we denote by  $\varepsilon=\boldsymbol{i}\in\C$  a primitive fourth root of  unity and define 
\begin{align*}
f&=\frac{z+y}{2}+\varepsilon\frac{y-z}{2}\\
u&=z\\
g(x,z)&=\frac{1-\varepsilon}{2}z^3+\frac{1+\varepsilon}{2}z-(z^4-1)\frac{z}{4}(3\frac{1-\varepsilon}{2}z^2+\frac{1+\varepsilon}{2})+x\frac{1}{2}z^2.
\end{align*}
Then, we have that 
\begin{align*}
&f|_{\ell_{11}}=1,  && g(x,1)=\sigma_{1 1}(x)\\
&f|_{\ell_{1 -1}}=\varepsilon,  && g(x,\varepsilon)=\sigma_{1 -1}(x)\\
&f|_{\ell_{-1 1}}=-\varepsilon,  && g(x,-\varepsilon)=\sigma_{-1 1}(x)\\
&f|_{\ell_{-1 -1}}=-1,   && g(x,-1)=\sigma_{-1 -1}(x)
\end{align*} 
and 
\[\frac{\partial g}{\partial z}(x,\varepsilon^i)\equiv0 \mod (x)\]
for all $1\leq i\leq 4$. This produces the isomorphism
\begin{align*}
D\times\A^1&\to \{xy=z^4-1\}\times\A^1\\
(x,y,z,t,w)&\mapsto(x,\frac{(f+xw)^4-1}{x},f+xw,\frac{u-g(x,f+xw)}{x^2}).
\end{align*}


\begin{bibdiv}
\begin{biblist}

\bib{Al}{thesis}{
   author={Alhajjar, Bachar},
   title={Locally Nilpotent Derivations of Integral Domains},
   type={Ph.D. Thesis},
   organization={Universit\'e de Bourgogne},
    date={2015},
}

\bib{Da}{article}{
   author={Danielewski, W.},
   title={On the cancellation problem and automorphism groups of affine algebraic varieties},
   status={preprint},
   date={1989},
}

\bib{Du}{article}{
   author={Dubouloz, Adrien},
   title={Danielewski-Fieseler surfaces},
   journal={Transform. Groups},
   volume={10},
   date={2005},
   number={2},
   pages={139--162},
}

\bib{DuPo}{article}{
   author={Dubouloz, Adrien},
   author={Poloni, Pierre-Marie},
   title={On a class of Danielewski surfaces in affine 3-space},
   journal={J. Algebra},
   volume={321},
   date={2009},
   number={7},
   pages={1797--1812},
}
 
\bib{Fi}{article}{
   author={Fieseler, Karl-Heinz},
   title={On complex affine surfaces with ${\bf C}^+$-action},
   journal={Comment. Math. Helv.},
   volume={69},
   date={1994},
   number={1},
   pages={5--27},
}

\bib{Fre}{book}{
   author={Freudenburg, Gene},
   title={Algebraic theory of locally nilpotent derivations},
   series={Encyclopaedia of Mathematical Sciences},
   volume={136},
   edition={2},
   note={Invariant Theory and Algebraic Transformation Groups, VII},
   publisher={Springer-Verlag, Berlin},
   date={2017},
   pages={xxii+319},
}

\bib{GuSe}{article}{
   author={Gupta, Neena},
   author={Sen, Sourav},
   title={On double Danielewski surfaces and the cancellation problem},
   journal={J. Algebra},
   volume={533},
   date={2019},
   pages={25--43},
} 

\bib{SY}{article}{
   author={Shpilrain, Vladimir},
   author={Yu, Jie-Tai},
   title={Affine varieties with equivalent cylinders},
   journal={J. Algebra},
   volume={251},
   date={2002},
   number={1},
   pages={295--307},
}
 
\end{biblist}
\end{bibdiv}
   
\end{document}